\theoremstyle{plain}
\newtheorem{Thm}{Theorem}[section]
\newtheorem{Def}[Thm]{Definiton}
\newtheorem{Prop}[Thm]{Proposition}
\newtheorem{Lem}[Thm]{Lemma}
\newtheorem{Cor}[Thm]{Corollary}
\numberwithin{equation}{section}
\begin{document}
\title{Finite pattern problems related to Engel expansion}
\author[hzau]{Chun-Yun Cao}
\ead{caochunyun@mail.hzau.edu.cn}
\author[hzau]{Yang Xiao}
\ead{1367379792@qq.com}

\address[hzau]{College of informatics, Huazhong Agricultural University, 430070 Wuhan, P.R.China.}

\begin{abstract}
Let $\mathcal{F}$ be a countable collection of functions $f$ defined on the integers with integer values, such that for every $f\in \mathcal{F}$, $f(n)\to +\infty$ as $n\to +\infty$. 
This paper primarily investigates the Hausdorff dimension of the set of points whose digit sequences of the Engel expansion are strictly increasing and contain any finite pattern of $\mathcal{F}$, demonstrating applications with representative examples.
\end{abstract}
\begin{keyword}
 Engel expansion\sep  finite pattern \sep upper Banach density \sep arithmetic progression \sep  geometric progressions
\MSC[2020]{	28A80 \sep 11K55}
\end{keyword}

\maketitle

\section{Introduction}

Finite-pattern problems represent a significant research area spanning multiple mathematical disciplines. 
In geometric measure theory, one frequently encounters questions such as: How "large" can a set be while avoiding certain patterns? or conversely,  how "small" can a set be while guaranteeing the presence of certain patterns? 
As an illustration, \cite{UY} shows that for any given dimension function $h$, and a family of functions $\mathcal{F}$ that
satisfy certain conditions, there is a closed set $E$ in $\mathds{R}^n$ , of $h$-Hausdorff
measure zero, such that for any finite set $\{f_1, \cdots, f_n\}\subset \mathcal{F}$, $E$ satisfies that
$\bigcap_{i=1}^nf_i(E)\neq \emptyset$. And there exists a closed set in $\mathds{R}$, of zero $h$-Hausdorff
measure but contains any polynomial pattern.
See \cite{FY,Fu,Y} and the references therein for more related results.

In the discrete case, the famour Roth-Szemerédi theorem \cite{Sz} provides the foundational result that any subset of  $\mathds{N}$ with  positive upper Banach density must contain arbitrary long arithmetic progressions.
The result of Green and Tao\cite{GT} shows that the converse fails, since the set of prime numbers contains arbitrarily long arithmetic progressions, yet has Banach density zero.
Here the upper Banach density of $A\subset \mathds{N}$ is defined as 
  $$\overline{d}_B(A)=\limsup_{n \to \infty}\frac{1}{n}\sup_{m\in \mathds{N}} \#\{i\in A: m\leq i< m+n\}.$$
  where \# denotes the cardinality of a set.
And for any integer $k\geq 3$ and real number $d\neq 0$, we call $A\subset \mathds{R}$ a $k$-term arithmetic progression with common difference $d$ if there exist some $t\in\mathds{R}$ such that $A=\{t+jd: j=0,\cdots,k-1\}$. 
And we call a set contains arbitrarily long arithmetic progressions if it contains a $k$-term arithmetic progression for any $k\geq 3$. 
Łaba and Pramanik \cite{LP} initiated the study on the relationship between Fourier decay and the existence of arithmetic progressions.The existence of arithmetic progressions is also closely related to  Assouad dimension.
For more results, please refer to \cite{DZ,FS,L,P,XJP} and the references therein. 

This paper primarily focuses on the finite-pattern problems related to Engel expansions.
The Engel expansion, named after the German mathematician Engel, is a unique representation of real numbers in the unit interval (0,1] as an infinite series of reciprocals of an increasing sequence of positive integers. 
Engel expansions can be reinterpreted via dynamical systems. 
For any $x\in(0,1]$, the Engel map $T:(0,1]\rightarrow (0,1]$ is defined by 
\begin{equation}\label{eq1}
    T(x):=\left(x-\frac{1}{n+1}\right)(n+1),\enspace \text{when}\enspace x\in\left(\frac{1}{n+1},\frac{1}{n}\right],\enspace n\geq 1.
\end{equation}
Then we define the integer sequence $\{d_n(x)\}_{n\geq 1}$ by
\begin{equation}\label{eq2}
  d_1(x)=\left\lfloor \frac{1}{x} \right\rfloor+1,\enspace d_n(x)=d_1(T^{n-1}(x)),\enspace n\geq 1,
\end{equation}
where $\left\lfloor x \right\rfloor $ denotes the integer part of $x$ and $T^n$ denotes the $n$th iteration 
of $T$ ($T^0$ is the identity transformation on $(0,1]$).
By \eqref{eq1} and \eqref{eq2}, any $x\in (0,1]$ can be uniquely represented as an infinite series expansion of the form
\begin{equation}\label{eq3}
  x=\frac{1}{d_1(x)}+\frac{1}{d_1(x)d_2(x)}+\cdots +\frac{1}{d_1(x)d_2(x) \cdots d_n(x)}+\cdots.
\end{equation}
We call the form \eqref{eq3} the Engel expansion of $x$ and denote it by $x=[d_1(x), d_2(x), \cdots, d_n(x), \cdots]$.
Furthermore, the positive integer sequence $\{d_n(x)\}_{n\geq1}$ is called the digit sequence of the Engel expansion of $x$.

Since the digit sequence of the Engel expansion is non-decreasing, it is natural to investigate whether the corresponding subset of natural numbers includes any arithmetic progressions.
Tian and Fang \cite{TF} showed that the set of $x\in (0, 1]$ for which $\{d_n(x)\}_{n\geq 1}$ contains arbitrarily long arithmetic progressions with arbitrary common difference is a Lebesgue null set, but has Hausdorff dimension one. 
Similar questions for other types of expansions have been extensively studied,
such as the continued fraction expansion \cite{TW}, the Lüroth expansion \cite{ZC1}. 
Note that the continued fraction system and the Lüroth system are both special $2$-decaying Gauss-like systems.
Zhang and Cao \cite{ZC2} extended the corresponding conclusions to a general $d$-decaying Gauss-like infinite iterated function systems, they showed that the Hausdorff dimension of the set of points whose digit sequences are strictly increasing and
of upper Banach density one is $\frac{1}{d}$.
Arithmetic progressions can be regarded as linear patterns. 
Tian, Wu and Lou \cite{TWL} investigated finite pattern problems related to Lüroth expansion.
More precisely, Let $\mathcal{F}$ be a set of countable many functions, where every function $f\in \mathcal{F}$ satifies 
\begin{equation}\label{f}
    f:\mathds{Z} \to \mathds{Z}  ~~\text{ and }~~  f(n)\to +\infty ~ \text{as} ~ n\to +\infty.
\end{equation}
A set $A$ of positive integers is said to contain any finite pattern of $\mathcal{F}$ if for any finitely many
functions $f_{n_1},f_{n_2},\cdots,f_{n_k}\in \mathcal{F}$, there exists a $n\in \mathds{N}$ such that
$$\{f_{n_1}(n),f_{n_2}(n),\cdots,f_{n_k}(n)\}\subset A.$$
Tian, Wu and Lou \cite{TWL} showed that the Hausdorff dimension of the set of the points whose Lüroth digit sequences are  strictly increasing and contains  any  finite pattern  of $\mathcal{F}$ is $1/2$. 
We aim to investigate the analogous problems in Engel expansions, with a focus on analyzing the size of the following set 
\[E_{\mathcal{F}}=\{x\in (0,1]:\{d_n(x)\}_{n\geq 1} ~\text{is strictly increasing and contains any finite pattern of}~\mathcal{F}\}.\]
\begin{Thm}\label{thF}
    For any set  $\mathcal{F}$ of countable many functions satisfying \eqref{f}, $\dim_HE_{\mathcal{F}}=1$.
\end{Thm}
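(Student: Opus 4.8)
The bound $\dim_H E_{\mathcal{F}}\le 1$ is automatic, so the task is to prove $\dim_H E_{\mathcal{F}}\ge 1$. The plan is to build an explicit Cantor subset $F\subseteq E_{\mathcal F}$ of dimension arbitrarily close to $1$ and apply a mass distribution argument, using two ingredients: the metric description of Engel cylinders, and the countability of the pattern constraints. Recall that a strictly increasing sequence $(d_1,d_2,\dots)$ with $d_1\ge 2$ is the Engel digit sequence of a unique $x\in(0,1]$; moreover, solving $d_{n+1}\ge d_n$ through \eqref{eq1}--\eqref{eq2} shows that $T^{n}x$ ranges over $(0,1/(d_n-1)]$, so from $x=\sum_{k=1}^n (d_1\cdots d_k)^{-1}+(d_1\cdots d_n)^{-1}T^nx$ the cylinder $I(d_1,\dots,d_n)=\{x:d_k(x)=d_k,\ 1\le k\le n\}$ is an interval of length $|I(d_1,\dots,d_n)|=\big((d_n-1)\prod_{k=1}^n d_k\big)^{-1}$, whence $\log|I(d_1,\dots,d_n)|=-\sum_{k=1}^n\log d_k+O(\log d_n)$. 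Since $\mathcal F$ is countable, its finite subsets form a countable list $S_1,S_2,\dots$ with $S_j=\{f_{j,1},\dots,f_{j,k_j}\}$, and it suffices to arrange that the digit set of every $x\in F$ contains $\{f_{j,1}(n_j),\dots,f_{j,k_j}(n_j)\}$ for one well-chosen $n_j$, for each $j$.

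I would construct the digits in consecutive stages $j=1,2,\dots$, tracking the current largest digit $P$ and interleaving two kinds of blocks. A \emph{free block} starting with current maximum $a$ is a run of positions whose digits are chosen, independently for each point of $F$, from geometrically growing windows $\{a,\dots,2a\},\{2a{+}1,\dots,2(2a{+}1)\},\dots$; this keeps the sequence strictly increasing, offers $N_k\asymp d_k$ admissible choices at position $k$, and contributes the per-position ratio $\log N_k/\log d_k\ge 1-\log 2/\log a$. A \emph{pattern block} for $S_j$ is inserted by choosing $n_j$ to be the least integer with $f_{j,i}(n_j)>P$ for all $i$ (possible by \eqref{f}), letting $v_1<\cdots<v_m$ be the distinct values among the $f_{j,i}(n_j)$, and forcing these $m$ digits into the sequence in increasing order, each above the current maximum; whenever the next forced value $v$ exceeds twice the current maximum, I first fill the gap below $v$ with a doubling free block, and after all of $S_j$ is placed I append a long dimension-building free block before passing to stage $j+1$. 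Every $x\in F$ then has a strictly increasing digit sequence whose digit set contains each prescribed pattern, so $F\subseteq E_{\mathcal F}$. I put on $F$ the measure $\mu$ splitting mass uniformly among admissible choices, so $\mu(I_n)=\prod_{k\le n,\ \mathrm{free}}N_k^{-1}$ and $\log\mu(I_n)=-\sum_{k\le n,\ \mathrm{free}}\log N_k$.

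By the two displayed formulas, the local dimension along $F$ is $\log\mu(I_n)/\log|I_n|\sim \Phi_{\mathrm f}(n)/\big(\Phi_{\mathrm f}(n)+\Phi_{\mathrm p}(n)\big)$, where $\Phi_{\mathrm f}(n)=\sum_{k\le n,\ \mathrm{free}}\log d_k$ and $\Phi_{\mathrm p}(n)=\sum_{k\le n,\ \mathrm{pattern}}\log d_k$ (here $\log N_k\sim\log d_k$ on free positions, since every free block starts above the ever-growing $P$). It therefore remains to guarantee $\Phi_{\mathrm p}(n)=o(\Phi_{\mathrm f}(n))$ \emph{for every} $n$, not merely along a subsequence, because Hausdorff dimension is governed by the $\liminf$. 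This is the main obstacle, and it is exactly where \eqref{f} is used in full force: a function in $\mathcal F$ may grow arbitrarily fast, so a single forced digit $v=f_{j,i}(n_j)$ can be astronomically larger than the current maximum $P$, adding a term $\log v$ to $\Phi_{\mathrm p}$ that no amount of \emph{previously} accumulated free cost can offset. The resolution is the gap-filling rule: when $v\gg P$, the interval $(P,v)$ is long enough to carry a doubling free block of cost $\asymp(\log v)^2$, which dwarfs the term $\log v$ it must absorb; when instead $v\le 2P$, the new term $\log v\le\log P+\log 2$ is comparable to digit costs already present and is absorbed by the free cost built in earlier stages. Choosing the dimension-building free blocks long enough at the end of each stage maintains an invariant $\Phi_{\mathrm f}\ge 2^{j}\Phi_{\mathrm p}$ at the close of stage $j$, while the dichotomy above controls the within-stage transients, yielding $\Phi_{\mathrm p}(n)/\Phi_{\mathrm f}(n)\to 0$ uniformly.

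Consequently $\liminf_{n\to\infty}\log\mu(I_n(x))/\log|I_n(x)|=1$ for every $x\in F$. Passing from cylinders to balls is routine here, using only the standard comparability, for Engel expansions, of $|I_n|$ with the distances to the neighbouring generation-$n$ cylinders (which, inside a free block, even carry equal $\mu$-mass), so a ball meets boundedly many comparable cylinders; the mass distribution principle (Billingsley's lemma) then gives $\dim_H F\ge 1-\varepsilon$ for every $\varepsilon>0$. Hence $\dim_H E_{\mathcal F}\ge\dim_H F=1$, and with the trivial upper bound this proves $\dim_H E_{\mathcal F}=1$.
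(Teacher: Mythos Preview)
Your approach is correct and reaches the same conclusion, but the route differs from the paper's in one structural respect. You build the target Cantor set $F\subseteq E_{\mathcal F}$ in a single pass, interleaving free doubling blocks with forced pattern values, and you absorb the cost of a possibly enormous forced value $v$ by the gap-filling device: the doubling run from the current maximum up to $v$ contributes free cost of order $(\log v)^2$, which dominates the single pattern cost $\log v$. The paper instead decouples the two ingredients. It first exploits \eqref{f} aggressively, choosing the evaluation points $t_k$ so that every forced value attached to $\mathcal F_k$ exceeds $2a^{n_k^2}$; this makes the forced sequence $\{b_m\}$ so sparse that at most $\sqrt n$ of its terms lie below $2a^n$ (Lemma~\ref{bm}). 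It then builds a ``clean'' set $E_0$ with $d_n\in[a^n,2a^n)\setminus\{b_m\}$, computes $\dim_H E_0=1$ by a short Billingsley argument, and finally shows that the insertion map $\pi\colon E_0\to E_f\subset E_{\mathcal F}$ is quasi-Lipschitz (Lemma~\ref{pi}), hence dimension-preserving. Your method is more self-contained---no quasi-Lipschitz machinery is needed---and makes transparent how the obstruction of an astronomically large forced digit is neutralised \emph{in situ}; the paper's method trades that for lighter bookkeeping, since the sparse choice of $t_k$ renders the forced values asymptotically invisible and compresses all the delicate within-stage transient control you describe into a single quasi-Lipschitz estimate. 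One small point worth tightening in your write-up: to keep the measure $\mu$ and the window structure uniform across branches, the ``current maximum'' $P$ used to select $n_j$ and to start the next free block should be taken as the top of the last window rather than the branch-dependent last digit; with that convention the formula $\mu(I_n)=\prod_{k\le n,\ \mathrm{free}}N_k^{-1}$ is well-defined and the rest of your argument goes through as stated.
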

As a consequence of this theorem, we obtain various finite pattern conclusions, with selected examples to be demonstrated.
The first example concerns digit sequences with positive upper Banach density.
\begin{Cor}\label{thB}
Let 
\begin{align*}
E_B=\{x\in (0,1]:&\{d_n(x)\}_{n\geq 1} ~\text{is strictly increasing and}~\\
 &\text{ is of upper Banach density one as  a  subset  of integers}\}.  
\end{align*}
Then $\dim_HE_{B}=1$.
\end{Cor}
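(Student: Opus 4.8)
The plan is to derive Corollary \ref{thB} as a direct application of Theorem \ref{thF} by exhibiting a countable family $\mathcal{F}$ whose ``contains any finite pattern'' condition forces upper Banach density one. Let me think about what density one really demands and how to encode it.

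Upper Banach density one means that for the set $A=\{d_n(x):n\geq 1\}$, we have $\overline{d}_B(A)=1$, i.e. for every $\varepsilon>0$ there exist arbitrarily long blocks of consecutive integers in which the proportion lying in $A$ exceeds $1-\varepsilon$. So I would design $\mathcal{F}$ so that the finite-pattern property produces, for each $k$, an interval of integers almost entirely contained in $A$.

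The plan is as follows.
\begin{enumerate}
\item[(1)] Construct an explicit countable family $\mathcal{F}=\{f_j\}_{j\geq 1}$ of functions satisfying \eqref{f} whose finite patterns tile up long runs of consecutive integers. A clean choice is to take, for each fixed length parameter, the functions $f_{0},f_{1},\dots$ enumerating ``shifted'' maps so that a finite subfamily evaluated at a single $n$ yields an entire run $\{n,n+1,\dots,n+L\}$ (or, more flexibly, a family indexed so that any desired consecutive block $\{n,\ldots,n+L\}$ arises as $\{f_{n_1}(n),\dots,f_{n_k}(n)\}$ for suitable choices).
\item[(2)] Verify that each $f_j$ indeed satisfies \eqref{f}: it is integer-to-integer and tends to $+\infty$. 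For the shift-type maps $f_j(n)=n+j$ this is immediate.
\item[(3)] Invoke Theorem \ref{thF} to conclude $\dim_H E_{\mathcal{F}}=1$ for this particular $\mathcal{F}$.
\item[(4)] Show the inclusion $E_{\mathcal{F}}\subseteq E_B$, i.e. that containing every finite pattern of this $\mathcal{F}$ forces $\overline{d}_B(A)=1$. Given $\varepsilon>0$, pick $L$ with $(L+1)/(L+2)>1-\varepsilon$; the pattern property hands us, for infinitely many positions, a consecutive block of length $L+1$ contained in $A$, so the density on that block is $1$, whence the $\limsup$ defining $\overline{d}_B$ equals $1$.
\item[(5)] Combine (3) and (4): since $E_{\mathcal{F}}\subseteq E_B\subseteq(0,1]$ and $E_{\mathcal{F}}$ has full dimension, monotonicity of Hausdorff dimension gives $1=\dim_H E_{\mathcal{F}}\leq \dim_H E_B\leq 1$, hence $\dim_H E_B=1$.
\end{enumerate}

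The conceptual core is step (1) together with step (4): I must choose $\mathcal{F}$ so that ``any finite pattern'' is strong enough to guarantee density one, yet each function still obeys \eqref{f}, and the resulting set $E_{\mathcal{F}}$ sits inside $E_B$. The subtlety I expect to be the main obstacle is reconciling the \emph{strictly increasing} requirement on the digit sequence with the demand that long runs of consecutive integers all appear as digits: a strictly increasing sequence can certainly contain a block $\{n,n+1,\dots,n+L\}$ (these are distinct), so there is no contradiction, but one must be careful that the finite-pattern definition (which only asks that each finite set of pattern-values be a \emph{subset} of $A$, realized at possibly different positions $n$) genuinely yields blocks whose relative density tends to $1$ along a subsequence of window sizes; quantifying this to reach exactly $\overline{d}_B(A)=1$ rather than merely a positive density is where the estimate in step (4) must be executed with care.
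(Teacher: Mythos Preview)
Your proposal is correct and essentially mirrors the paper's argument: the paper takes $\mathcal{F}=\{kn+\ell:k\in\mathds{Z}^+,\ell\in\mathds{N}\}$ (chosen large enough to handle several corollaries at once), but for Corollary~\ref{thB} only uses the shift subfamily $\{n+1,\dots,n+m\}$ to produce, for each $m$, some $n_m$ with $\{n_m+1,\dots,n_m+m\}\subset\{d_n(x)\}_{n\geq 1}$, and then computes $\overline{d}_B\geq\limsup_m(m-1)/m=1$, exactly as in your steps (1)--(5). One clarification: your parenthetical worry in the last paragraph about ``possibly different positions $n$'' is misplaced---the finite-pattern definition provides a \emph{single} $n$ for each chosen finite subfamily $\{f_0,\dots,f_L\}$, so the block $\{n,n+1,\dots,n+L\}$ really does land entirely inside $A$, and the density on that window is exactly $1$; there is no subtlety to overcome there.
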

The subsequent corollary follows immediately from either the Roth-Szemerédi theorem or, alternatively, may be deduced from Theorem \ref{thF}.
\begin{Cor}\label{thA}
Let 
\begin{align*}
E_A=\{x\in (0,1]:&\{d_n(x)\}_{n\geq 1} ~\text{is strictly increasing and contains arbitrarily long arithmetic progressions }\\
 &\text{with arbitrarily positive integer common  difference}\}.  
\end{align*}
Then 
$\dim_HE_{A}=1$.
\end{Cor}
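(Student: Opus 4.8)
The plan is to deduce the statement directly from Theorem~\ref{thF} by choosing the family $\mathcal{F}$ so that ``containing every finite pattern of $\mathcal{F}$'' is exactly the property of containing arbitrarily long arithmetic progressions of every positive integer common difference. The upper bound $\dim_H E_A \le 1$ is immediate from $E_A \subseteq (0,1] \subset \mathds{R}$, so only the lower bound requires work, and this will follow from an inclusion of the form $E_{\mathcal{F}} \subseteq E_A$ together with the monotonicity of Hausdorff dimension.

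First I would specify the family. For each positive integer common difference $d \ge 1$ and each index $j \ge 0$, define $f_{d,j}\colon \mathds{Z}\to\mathds{Z}$ by $f_{d,j}(n) = n + jd$, and set $\mathcal{F} = \{f_{d,j} : d \ge 1,\ j \ge 0\}$. This collection is countable, being indexed by the pairs $(d,j)$, and each $f_{d,j}$ satisfies \eqref{f} since $f_{d,j}(n) = n + jd \to +\infty$ as $n \to +\infty$. Hence Theorem~\ref{thF} applies to this $\mathcal{F}$ and gives $\dim_H E_{\mathcal{F}} = 1$.

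Next I would verify the inclusion $E_{\mathcal{F}} \subseteq E_A$. Fix $x \in E_{\mathcal{F}}$ and write $A = \{d_n(x) : n \ge 1\}$ for its digit set; by definition the digit sequence is strictly increasing and $A$ contains every finite pattern of $\mathcal{F}$. Given any length $k \ge 3$ and any positive integer $d$, apply the finite-pattern property to the finite subfamily $\{f_{d,0}, f_{d,1}, \ldots, f_{d,k-1}\} \subset \mathcal{F}$: there exists $n$ with $\{f_{d,j}(n) : 0 \le j \le k-1\} = \{n, n+d, \ldots, n+(k-1)d\} \subseteq A$, which is precisely a $k$-term arithmetic progression with common difference $d$ contained in $A$. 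Since $k$ and $d$ were arbitrary, $A$ contains arbitrarily long arithmetic progressions of every positive integer common difference, so $x \in E_A$. Combining the inclusion with Theorem~\ref{thF} and the monotonicity of dimension yields $\dim_H E_A \ge \dim_H E_{\mathcal{F}} = 1$, and with the trivial upper bound we conclude $\dim_H E_A = 1$.

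I do not anticipate a serious obstacle: the entire content sits in Theorem~\ref{thF}, and the remaining step is the routine translation of ``arithmetic progression of length $k$ and difference $d$'' into the finite subfamily $\{f_{d,j}\}_{0\le j<k}$. The only point deserving care is that a single family $\mathcal{F}$ must encode all pairs $(k,d)$ at once, which is automatic because the finite-pattern hypothesis quantifies over all finite subfamilies. As an alternative route one could instead invoke Corollary~\ref{thB}: a digit set $A$ of upper Banach density one is so dense on arbitrarily long blocks $[m,m+N)$ that it cannot avoid any prescribed progression $\{n+jd\}_{0\le j<k}$, since each omitted integer destroys at most $k$ of the $\asymp N$ such progressions in the block, leaving a surviving one entirely inside $A$ once the block is long and dense enough; this gives $E_B \subseteq E_A$ and hence $\dim_H E_A \ge \dim_H E_B = 1$.
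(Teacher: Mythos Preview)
Your proposal is correct and follows essentially the same route as the paper: the paper takes $\mathcal{F}=\{kn+\ell:k\in\mathds{Z}^+,\ \ell\in\mathds{N}\}$ and, for $E_A$, uses only the translation subfamily $\{n+\ell\}$ applied to the finite set $\{d,2d,\ldots,kd\}$, which is exactly your choice $\{f_{d,j}\}_{0\le j<k}$ up to reindexing. The paper likewise remarks that one may alternatively pass through the Roth--Szemer\'edi theorem (i.e.\ via $E_B$), matching your suggested second route.
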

Let $A,B\subset \mathds{N}$. 
By ``$A$ contains a translation of $B$'', we mean $\{x+n:x\in B\}\subset A$ for some integer $n$. A $k$-term arithmetic progression with common difference $d$ may be interpreted as a translation of the set $\{d,2d,\cdots,kd\}$. The framework actually yields a ``stronger'' version of Corollary \ref{thA}.
\begin{Cor}\label{thT}
Let 
\begin{align*}
E_T=\{x\in (0,1]:&\{d_n(x)\}_{n\geq 1} ~\text{is strictly increasing}\\
 &\text{and contains a translation of} ~B ~\text{for any finite subset}~ B ~\text{of}~ \mathds{N} \}.  
\end{align*}
Then 
$\dim_HE_{T}=1$.
\end{Cor}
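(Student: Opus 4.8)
The plan is to deduce this corollary directly from Theorem~\ref{thF} by exhibiting a suitable countable family $\mathcal{F}$ whose ``finite pattern'' property coincides with the translation property defining $E_T$. The natural choice is the family of all translations by a nonnegative integer: for each $m\in\mathds{N}$ set $f_m(n)=n+m$, and let $\mathcal{F}=\{f_m:m\in\mathds{N}\}$. This collection is countable, and each $f_m$ plainly satisfies \eqref{f}, since $f_m:\mathds{Z}\to\mathds{Z}$ and $f_m(n)=n+m\to+\infty$ as $n\to+\infty$. Thus Theorem~\ref{thF} applies to this $\mathcal{F}$.

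The key observation is that, for this $\mathcal{F}$, the two membership conditions match term by term. First I would unwind the definition of ``contains any finite pattern of $\mathcal{F}$'': given finitely many distinct indices $m_1,\dots,m_k$, there exists $n\in\mathds{N}$ with $\{f_{m_1}(n),\dots,f_{m_k}(n)\}=\{n+m_1,\dots,n+m_k\}\subset\{d_j(x)\}_{j\ge1}$. But $B=\{m_1,\dots,m_k\}$ is an arbitrary finite subset of $\mathds{N}$, and $\{n+m_1,\dots,n+m_k\}$ is precisely the translate $\{b+n:b\in B\}$. Hence a strictly increasing digit sequence contains any finite pattern of $\mathcal{F}$ exactly when it contains a translation of every finite $B\subset\mathds{N}$, giving $E_{\mathcal{F}}=E_T$. (Strictly, the translation parameter defining $E_T$ ranges over $\mathds{Z}$ whereas here $n\in\mathds{N}$; this causes no difficulty, since any translate of $B$ lying inside the sequence $\{d_j(x)\}$---a set of positive integers tending to $+\infty$---is necessarily a translate by a sufficiently large positive integer, so the inclusion $E_{\mathcal{F}}\subseteq E_T$ already suffices.)

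With the identification $E_{\mathcal{F}}\subseteq E_T$ in hand, the conclusion is immediate: Theorem~\ref{thF} gives $\dim_H E_{\mathcal{F}}=1$, and by monotonicity of Hausdorff dimension $\dim_H E_T\ge\dim_H E_{\mathcal{F}}=1$, while the reverse inequality is trivial since $E_T\subseteq(0,1]$. Therefore $\dim_H E_T=1$.

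There is no genuine analytic obstacle here, as all the dimension-theoretic work is already carried by Theorem~\ref{thF}. The only point demanding care is the bookkeeping in the second paragraph---confirming that the family of translations is exactly the right choice of $\mathcal{F}$ and that the quantifier ``for any finite subset $B$ of $\mathds{N}$'' corresponds precisely to ``for any finite collection of functions $f_{m_1},\dots,f_{m_k}\in\mathcal{F}$.'' Once this correspondence is checked, the result follows at once.
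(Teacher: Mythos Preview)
Your proposal is correct and follows essentially the same approach as the paper: choose $\mathcal{F}$ to contain the translations $n\mapsto n+m$ and observe that $E_{\mathcal{F}}\subseteq E_T$, whence Theorem~\ref{thF} gives $\dim_H E_T=1$. The only cosmetic difference is that the paper works with the larger umbrella family $\mathcal{F}=\{kn+\ell:k\in\mathds{Z}^+,\ \ell\in\mathds{N}\}$ so as to handle several corollaries at once, using only the subfamily $\{n+b:b\in B\}$ for this particular result.
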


Similarly, for $A,B\subset \mathds{N}$, we say that $A$ contains a scalar multiples of $B$ if $\{nx:x\in B\}\subset A$ for some integer $n$, and $A$ contains a power of $B$ if $\{x^n:x\in B\}\subset A$ for some integer $n$. Then we have

\begin{Cor}\label{thSP}
Let 
\begin{align*}
E_S=\{x\in (0,1]:&\{d_n(x)\}_{n\geq 1} ~\text{is strictly increasing}\\
 &\text{and contains a scalar multiples of} ~B ~\text{for any finite subset}~ B ~\text{of}~ \mathds{Z}^+ \}.  
 \end{align*}
 and 
 \begin{align*}
E_P=\{x\in (0,1]:&\{d_n(x)\}_{n\geq 1} ~\text{is strictly increasing}\\
 &\text{and contains a power of} ~B ~\text{for any finite subset}~ B ~\text{of}~ \{2,3,\cdots\} \}.  
\end{align*}
Then 
$\dim_HE_{S}=\dim_HE_{P}=1$.

Especially, let 
\begin{align*}
E_G=\{x\in (0,1]:&\{d_n(x)\}_{n\geq 1} ~\text{is strictly increasing and contains arbitrarily long geometric progressions }\\
 &\text{with arbitrarily positive integer common ratio}\}.  
\end{align*}
Then $\dim_HE_{G}=1$.
\end{Cor}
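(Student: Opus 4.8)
The plan is to obtain all three dimension equalities as consequences of Theorem \ref{thF}: in each case I would exhibit a countable family $\mathcal{F}$ satisfying \eqref{f} for which the set in question coincides with $E_{\mathcal{F}}$. Since every subset of $(0,1]\subseteq\mathds{R}$ has Hausdorff dimension at most $1$, producing such an $\mathcal{F}$ and invoking $\dim_H E_{\mathcal{F}}=1$ immediately yields the desired value $1$.

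For $E_S$, I would take $\mathcal{F}_S=\{f_b:b\in\mathds{Z}^+\}$ with $f_b(n)=bn$. Each $f_b$ maps $\mathds{Z}$ into $\mathds{Z}$, and $f_b(n)=bn\to+\infty$ as $n\to+\infty$ because $b\geq 1$, so $\mathcal{F}_S$ satisfies \eqref{f}; moreover $\mathcal{F}_S$ is countable since $b$ ranges over $\mathds{Z}^+$. The decisive observation is that a finite subcollection $\{f_{b_1},\dots,f_{b_k}\}$ is in bijection with the finite set $B=\{b_1,\dots,b_k\}\subseteq\mathds{Z}^+$, and the finite-pattern clause ``there is $n$ with $\{f_{b_1}(n),\dots,f_{b_k}(n)\}\subseteq A$'' is exactly the clause ``$A$ contains the scalar multiple $\{nb_1,\dots,nb_k\}$ of $B$''. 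Hence $E_S=E_{\mathcal{F}_S}$ and $\dim_H E_S=1$.

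For $E_P$, I would set $\mathcal{F}_P=\{g_b:b\in\{2,3,\dots\}\}$ with $g_b(n)=b^n$ for $n\geq 1$, extended (say by $g_b(n)=b$) for $n\leq 0$ so that $g_b:\mathds{Z}\to\mathds{Z}$; this extension is immaterial because the finite-pattern definition only evaluates the functions at $n\in\mathds{N}$. Since $b\geq 2$ we have $g_b(n)\to+\infty$, so \eqref{f} holds, and $\mathcal{F}_P$ is again countable. As before, a finite subcollection encodes a finite $B\subseteq\{2,3,\dots\}$, and the requirement $\{b_1^n,\dots,b_k^n\}\subseteq A$ is precisely the statement that $A$ contains a power of $B$ (the degenerate choice $n=0$ never helps, since no Engel digit equals $1$). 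Thus $E_P=E_{\mathcal{F}_P}$ and $\dim_H E_P=1$.

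For the geometric-progression set $E_G$ I would not build a new family but instead deduce it from $E_S$. Given a common ratio $r\geq 2$ and a length $k$, apply the scalar-multiple property to $B=\{1,r,r^2,\dots,r^{k-1}\}$: every $x\in E_S$ then has a digit subsequence of the form $\{n,nr,\dots,nr^{k-1}\}$, which is exactly a $k$-term geometric progression of common ratio $r$. As $r$ and $k$ are arbitrary, this gives $E_S\subseteq E_G$, so $1=\dim_H E_S\leq\dim_H E_G\leq 1$. The whole argument is essentially bookkeeping, and I expect no genuine analytic obstacle: the only points demanding care are verifying \eqref{f} for each constructed family, checking that the finite-pattern formalism matches the scalar-multiple and power definitions, and handling the harmless technicality of extending $g_b$ to all of $\mathds{Z}$. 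Theorem \ref{thF} supplies the dimension lower bound directly.
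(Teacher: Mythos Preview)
Your proposal is correct and follows essentially the same route as the paper: choose an appropriate countable family $\mathcal{F}$ and invoke Theorem~\ref{thF}. The only cosmetic differences are that the paper uses the single larger family $\{kn+\ell:k\in\mathds{Z}^+,\ \ell\in\mathds{N}\}$ to handle $E_S$ (and $E_G$) simultaneously with the earlier corollaries, and it records only the inclusion $E_{\mathcal{F}}\subset E_S$ (resp.\ $E_G$, $E_P$) rather than your stronger equality---but either is sufficient, since the upper bound $1$ is trivial.
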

For more results on Engel expansions, please refer to \cite{FWW,FS,SLS,TF,ZS} and the references therein.

\section{Preliminaries}

Before we proceed with the proof of the theorems, in this section, we recall several definitions and 
basic properties of the Engel expansion, as well as
some important lemmas that will be needed later.
\begin{Def}(\cite{Gj})
  For a given positive integer $n$, a finite sequence $(d_1,d_2,\cdots,d_n)\in \mathds{N} ^n$ is said to be admissible 
  for Engel expansions if there exists $x\in (0,1]$ such that $d_j(x)=d_j$ for all $1\leq j \leq n$. An infinite 
  sequence $(d_1,d_2,\cdots,d_n,\cdots)\in \mathds{N} ^\mathds{N} $ is called an admissible sequence 
  if $(d_1,d_2,\cdots,d_n)$ is 
admissible for any $n \geq 1$.
\end{Def}

$\Sigma _n$ denotes the collecton of all the admissible sequences with length $n$ and $\Sigma $ denotes 
the collecton of all infinite admissible sequences. The following proposition gives a characterization of 
admissible sequences.

\begin{Prop}(\cite{Gj})
  A sequence $(d_1,d_2,\cdots,d_n,\cdots)\in \Sigma $ if and only if for all $n \geq 1$, 
  $$d_{n+1} \geq d_{n}\enspace and \enspace d_1 \geq 2.$$
\end{Prop}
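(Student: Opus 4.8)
The plan is to prove the two implications separately, working directly with the Engel map $T$ of \eqref{eq1} and the definitions in \eqref{eq2}, and to locate an explicit preimage for the sufficiency direction.

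For necessity, I would fix $x\in(0,1]$. Since $1/x\geq 1$ gives $\lfloor 1/x\rfloor\geq 1$, we get $d_1(x)\geq 2$. Because $T$ maps $(0,1]$ into itself, every iterate $T^{n-1}(x)$ lies in $(0,1]$, so the monotonicity $d_{n+1}(x)\geq d_n(x)$ reduces, via $d_n(x)=d_1(T^{n-1}(x))$, to the single inequality $d_1(T(y))\geq d_1(y)$ for every $y\in(0,1]$. To establish this I would set $v=d_1(y)$, so that $y\in(\tfrac{1}{v},\tfrac{1}{v-1}]$, and compute from \eqref{eq1} (with $n=v-1$) that $T(y)=vy-1\in(0,\tfrac{1}{v-1}]$. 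Hence $1/T(y)\geq v-1$, so $\lfloor 1/T(y)\rfloor\geq v-1$ and $d_1(T(y))\geq v=d_1(y)$, as required.

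For sufficiency, given a sequence with $d_1\geq 2$ and $d_{n+1}\geq d_n$, I would construct the point suggested by \eqref{eq3}. For each $j\geq 1$ set
\[
x_j:=\sum_{k=j}^{\infty}\frac{1}{d_j d_{j+1}\cdots d_k},
\]
the $k=j$ term being $\tfrac{1}{d_j}$. Factoring out this term yields the recursion $x_j=\tfrac{1}{d_j}(1+x_{j+1})$. The decisive estimate is the upper bound $x_j\leq \tfrac{1}{d_j-1}$: the hypothesis forces $d_i\geq d_j$ for all $i\geq j$, so each denominator satisfies $d_j\cdots d_k\geq d_j^{\,k-j+1}$, and $x_j$ is dominated by the geometric series $\sum_{m\geq 1}d_j^{-m}=\tfrac{1}{d_j-1}$. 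Combined with the strict lower bound $x_j>\tfrac{1}{d_j}$ (coming from $x_{j+1}>0$ in the recursion), this places $x_j\in(\tfrac{1}{d_j},\tfrac{1}{d_j-1}]$, an interval contained in $(0,1]$ because $d_j\geq 2$.

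From this localization the rest is mechanical: $x_j\in(\tfrac{1}{d_j},\tfrac{1}{d_j-1}]$ gives $\lfloor 1/x_j\rfloor=d_j-1$, hence $d_1(x_j)=d_j$, and applying \eqref{eq1} with $n=d_j-1$ gives $T(x_j)=d_jx_j-1=x_{j+1}$ by the recursion. An induction then shows $T^{j-1}(x_1)=x_j$, so $d_j(x_1)=d_1(T^{j-1}(x_1))=d_1(x_j)=d_j$ for every $j$; thus $(d_1,d_2,\dots)$ is the digit sequence of $x_1\in(0,1]$ and is therefore admissible. I expect the only genuine obstacle to be the upper bound $x_j\leq\tfrac{1}{d_j-1}$, which is exactly the step where the monotonicity hypothesis is indispensable: without $d_i\geq d_j$ the tail need not be summable against $d_j^{-m}$, and $x_j$ could overshoot the interval $(\tfrac{1}{d_j},\tfrac{1}{d_j-1}]$, breaking the identification of the first digit.
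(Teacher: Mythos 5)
The paper does not prove this proposition at all: it is quoted from Galambos \cite{Gj}, so there is no internal proof to compare against. Your argument is correct and complete, and it is essentially the classical one. Necessity is rightly reduced to the one-step inequality $d_1(T(y))\geq d_1(y)$: setting $v=d_1(y)$ places $y\in(\tfrac{1}{v},\tfrac{1}{v-1}]$, and $T(y)=vy-1\in(0,\tfrac{1}{v-1}]$ gives $\lfloor 1/T(y)\rfloor\geq v-1$, hence $d_1(T(y))\geq v$; together with $d_1(x)=\lfloor 1/x\rfloor+1\geq 2$ this handles one direction. For sufficiency, your tail sums $x_j=\sum_{k\geq j}(d_j\cdots d_k)^{-1}$ with the recursion $x_j=\tfrac{1}{d_j}(1+x_{j+1})$, the two-sided bound $x_j\in(\tfrac{1}{d_j},\tfrac{1}{d_j-1}]$ (where monotonicity enters via $d_j\cdots d_k\geq d_j^{\,k-j+1}$), and the identity $T(x_j)=d_jx_j-1=x_{j+1}$ correctly identify $d_j(x_1)=d_j$ for all $j$; note this even yields more than admissibility of the prefixes, namely a single point realizing the entire infinite sequence. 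One pedantic remark: membership in $\Sigma$ only demands that each finite prefix be witnessed by \emph{some} point, so in the necessity direction you should formally apply your pointwise statement to a witness of the prefix of length $n+1$ to deduce $d_{n+1}\geq d_n$; this bridging step is immediate and does not affect correctness. Your edge cases (the closed right endpoints, e.g.\ $x_j=\tfrac{1}{d_j-1}$ when the digits are eventually constant, matching the half-open intervals in \eqref{eq1}) are handled consistently.
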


Next we provide the definition of cylinders of Engel expansion, along with an examination of their structure and length.

\begin{Def}(\cite{Gj})
  Let $(d_1,d_2,\cdots,d_n)\in \Sigma _n$. We call 
  $$I_n(d_1,d_2,\cdots,d_n)=\{x \in (0,1]:d_1(x)=d_1,d_2(x)=d_2,\cdots,d_n(x)=d_n\}$$
  a cylinder of order n of the Engel expansions.
  
  It can be concluded that the digit sequences of the points in $I_n(d_1,d_2,\cdots,d_n)$ 
  begin with $(d_1,d_2,\cdots,d_n)$.
\end{Def}

\begin{Prop}\label{ZH}(\cite{Gj})
  Let $(d_1,d_2,\cdots,d_n)\in \Sigma _n$. The cylinder $I_n(d_1,d_2,\cdots,d_n)$ is a left-open, right-closed interval with the left endpoint 
  $$\frac{1}{d_1}+\frac{1}{d_1d_2}+\cdots+\frac{1}{d_1d_2 \cdots d_{n-1}}+\frac{1}{d_1d_2 \cdots d_{n-1} d_n},$$
  \\and the right endpoint 
  $$\frac{1}{d_1}+\frac{1}{d_1d_2}+\cdots+\frac{1}{d_1d_2 \cdots d_{n-1}}+\frac{1}{d_1d_2 \cdots d_{n-1}(d_n-1)}.$$
  The length of the interval $I_n(d_1,d_2,\cdots,d_n)$,  denoted by $|I_n(d_1,d_2,\cdots,d_n)|$,  is
  $$\frac{1}{d_1d_2\cdots d_{n-1}d_n(d_n-1)}.$$
\end{Prop}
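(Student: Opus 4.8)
The plan is to prove the proposition by induction on $n$, exploiting the fact that on each first-order cylinder the Engel map $T$ acts as an increasing affine map, so that the iterate $T^{n-1}$ restricted to an $(n-1)$-cylinder is again affine with an explicitly known slope. First I would treat the base case $n=1$ directly from the definition of the first digit: since $d_1(x)=\lfloor 1/x\rfloor+1$ by \eqref{eq2}, the equality $d_1(x)=d_1$ is equivalent to $d_1-1\le 1/x<d_1$, i.e. $x\in(\tfrac{1}{d_1},\tfrac{1}{d_1-1}]$. This is already a left-open, right-closed interval with the asserted endpoints (the partial sums being empty for $n=1$) and length $\tfrac{1}{d_1(d_1-1)}$. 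On this interval the relevant branch of \eqref{eq1} is $T(x)=d_1x-1$, whence $x=\tfrac{1}{d_1}+\tfrac{1}{d_1}T(x)$.

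Next I would record a renewal identity. Using $d_{k+1}(x)=d_1(T^k x)$ from \eqref{eq2} and iterating the single-step relation $y=\tfrac{1}{d_1(y)}+\tfrac{1}{d_1(y)}T(y)$ along the orbit $x,Tx,\dots,T^{m-1}x$, I obtain, for every $x\in I_m(d_1,\dots,d_m)$,
\[
x=\sum_{k=1}^{m}\frac{1}{d_1d_2\cdots d_k}+\frac{1}{d_1d_2\cdots d_m}\,T^m(x),
\]
which I verify by induction on $m$. In particular this shows that on $I_{m}(d_1,\dots,d_m)$ the iterate $T^{m}$ is an increasing affine map with slope $d_1\cdots d_m$, and, read backwards, that $x$ is recovered from $t=T^{m}(x)$ by the increasing affine parametrization $x=\sum_{k=1}^{m}\tfrac{1}{d_1\cdots d_k}+\tfrac{t}{d_1\cdots d_m}$.

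Then I would pass from order $n-1$ to order $n$. Since $d_n(x)=d_1(T^{n-1}x)$, membership in $I_n(d_1,\dots,d_n)$ is equivalent to $x\in I_{n-1}(d_1,\dots,d_{n-1})$ together with $T^{n-1}x\in I_1(d_n)=(\tfrac{1}{d_n},\tfrac{1}{d_n-1}]$. Substituting $m=n-1$ into the renewal identity and letting $t=T^{n-1}x$ sweep the half-open interval $(\tfrac{1}{d_n},\tfrac{1}{d_n-1}]$, the increasing affine parametrization carries this interval onto the set of admissible $x$; because the map is increasing and continuous, orientation is preserved, so $I_n$ is again left-open and right-closed, its left endpoint arising at $t=\tfrac{1}{d_n}$ and its right endpoint at $t=\tfrac{1}{d_n-1}$. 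A direct evaluation shows these are precisely the two displayed sums (the value at $t=\tfrac1{d_n}$ collapses to $\sum_{k=1}^{n}\tfrac{1}{d_1\cdots d_k}$). Here admissibility $d_n\ge d_{n-1}$ is used to guarantee that the resulting endpoints lie inside $I_{n-1}$, so the preimage is genuinely this full interval. Finally, subtracting the endpoints and simplifying $\tfrac{1}{d_1\cdots d_{n-1}(d_n-1)}-\tfrac{1}{d_1\cdots d_{n-1}d_n}$ yields the length $\tfrac{1}{d_1\cdots d_{n-1}d_n(d_n-1)}$.

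I expect the main obstacle to be the bookkeeping in the renewal identity: verifying that iterating the single-step relation really produces the partial sum $\sum_{k=1}^{m}1/(d_1\cdots d_k)$ with remainder $T^m(x)/(d_1\cdots d_m)$, and hence that the affine parametrization of $x$ in terms of $t=T^{m}(x)$ has slope exactly $1/(d_1\cdots d_m)$. Once this identity is established, the endpoint and length formulas follow by a single substitution, and the left-open/right-closed structure is inherited from $I_1(d_n)$ through an increasing map.
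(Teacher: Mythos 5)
Your proof is correct: the base case, the renewal identity $x=\sum_{k=1}^{m}\frac{1}{d_1\cdots d_k}+\frac{T^m(x)}{d_1\cdots d_m}$, the affine parametrization of $I_n$ by $t=T^{n-1}x\in\bigl(\frac{1}{d_n},\frac{1}{d_n-1}\bigr]$, and the use of admissibility $d_n\geq d_{n-1}$ to ensure the parametrized interval sits inside $I_{n-1}$ all check out, and they yield exactly the stated endpoints and length. The paper itself gives no proof of this proposition --- it is quoted from Galambos \cite{Gj} --- and your induction via the renewal identity is precisely the standard derivation underlying the cited result, so there is nothing substantive to compare beyond noting the agreement.
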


The lemma that follows, known as Billingsley's Theorem, is a frequently employed method for 
estimating the lower bound of the Hausdorff dimension of fractal sets.
\begin{Lem}(\cite{B})\label{mass}
  Let $E\subset (0,1]$ be a Borel set and $\mu$ is a finite Borel measure on $[0,1]$. Suppose $\mu (E)>0$. If
  $$\liminf_{r\to 0^+}\frac{\log \mu(B(x,r))}{\log r}\geq s$$
for any $x\in E$, where $B(x,r)$ denotes the open ball with center at x and radius r, then $\dim_H E\geq s.$
\end{Lem}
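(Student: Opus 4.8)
The plan is to establish the lower bound by showing $\mathcal{H}^t(E)>0$ for every $t<s$; this gives $\dim_H E\ge t$, and letting $t\to s^-$ yields $\dim_H E\ge s$. Fix such a $t$. The first step is to convert the liminf hypothesis into a local upper bound on ball masses. Since $\log r<0$ for $0<r<1$, the assumption $\liminf_{r\to0^+}\frac{\log\mu(B(x,r))}{\log r}\ge s>t$ guarantees, for each $x\in E$, a scale beyond which $\frac{\log\mu(B(x,r))}{\log r}\ge t$; equivalently, there is $\delta_x>0$ with $\mu(B(x,r))\le r^t$ for all $0<r<\delta_x$.

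The second step is to trade this pointwise control for uniform control on a subset of positive measure. For each integer $k\ge1$ I would set
$$E_k=\Big\{x\in E:\ \mu(B(x,r))\le r^t\ \text{ for all }\ 0<r\le\tfrac1k\Big\},$$
so that $E_1\subseteq E_2\subseteq\cdots$ and $E=\bigcup_{k\ge1}E_k$. The map $x\mapsto\mu(B(x,r))$ is lower semicontinuous for fixed $r$ (apply Fatou's lemma to the indicators $\mathbf{1}_{B(x_n,r)}$ along $x_n\to x$), so its sublevel sets are closed; moreover, since $B(x,\cdot)$ is increasing in the radius, the condition in the definition of $E_k$ is equivalent to requiring $\mu(B(x,r))\le r^t$ only for rational $r\le\tfrac1k$. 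Hence each $E_k$ is Borel, and continuity of $\mu$ from below gives $\sup_k\mu(E_k)=\mu(E)>0$; some index $k$ then satisfies $\mu(F)>0$ for $F:=E_k$, and I set $\delta:=1/k$.

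The third step is the uniform mass distribution argument on $F$. Fix a mesh $\eta\le\delta/2$ and let $\{U_i\}$ be any countable cover of $F$ with $|U_i|\le\eta$. Discarding the $U_i$ disjoint from $F$ and choosing $x_i\in U_i\cap F$, one has $U_i\subseteq B(x_i,2|U_i|)$ with $2|U_i|\le\delta=1/k$, so the definition of $F$ gives $\mu(U_i)\le\mu(B(x_i,2|U_i|))\le 2^t|U_i|^t$. Summing,
$$0<\mu(F)\le\sum_i\mu(U_i)\le 2^t\sum_i|U_i|^t,$$
so $\sum_i|U_i|^t\ge 2^{-t}\mu(F)$. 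Taking the infimum over such covers gives $\mathcal{H}^t_\eta(F)\ge 2^{-t}\mu(F)$ for every $\eta\le\delta/2$, and letting $\eta\to0$ yields $\mathcal{H}^t(F)\ge 2^{-t}\mu(F)>0$. Therefore $\dim_H E\ge\dim_H F\ge t$, and letting $t\uparrow s$ finishes the proof.

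I expect the main obstacle to be the measurability issue hidden in the second step, which must be settled before continuity of $\mu$ from below can be invoked. The defining condition of $E_k$ is an uncountable intersection over radii $r\in(0,1/k]$; the two observations that $x\mapsto\mu(B(x,r))$ is lower semicontinuous and that $B(x,\cdot)$ is monotone in $r$ together reduce this to a countable intersection of closed sets over rational radii, so each $E_k$ is Borel. The remaining steps are the classical mass distribution principle, whose only mild technical points are the inclusion $U_i\subseteq B(x_i,2|U_i|)$ and the resulting constant $2^t$, neither of which affects the positivity of $\mathcal{H}^t(F)$.
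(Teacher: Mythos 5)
Your proof is correct, and there is nothing in the paper to compare it against: the paper imports this lemma from Billingsley's 1961 article without proof, so your argument stands on its own. What you have written is the standard localization-plus-mass-distribution proof (essentially Falconer's treatment of local dimension bounds): fix $t<s$, convert the liminf into the pointwise bound $\mu(B(x,r))\le r^t$ below a scale $\delta_x$, exhaust $E$ by the sets $E_k$ to get a single $F=E_k$ with $\mu(F)>0$ and uniform scale $\delta=1/k$, then run the mass distribution principle on $F$ to get $\mathcal{H}^t(F)\ge 2^{-t}\mu(F)>0$. Your handling of the one genuine technical point -- measurability of $E_k$ via lower semicontinuity of $x\mapsto\mu(B(x,r))$ (Fatou on indicators) together with reduction to rational radii by monotonicity -- is exactly right, and it is needed before continuity of $\mu$ from below can be invoked. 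The only hairline omission is in the covering step: if some $U_i$ is a singleton, then $|U_i|=0$ and the inclusion $U_i\subseteq B(x_i,2|U_i|)$ fails because the ball is empty; but for $x\in F$ one has $\mu(\{x\})\le\inf_{0<r\le\delta}r^t=0$, so such sets contribute nothing to either side of the inequality $\mu(F)\le 2^t\sum_i|U_i|^t$, and the argument survives unchanged. With that one-line remark added, the proof is complete.
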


At the end of this section, we introduce a lemma that aids in subsequent proof, delineating the relationship 
between the Hausdorff dimension of a set and that of its image under a quasi-Lipschitz mapping.
\begin{Def}\label{qua}(\cite{XLF})
Let $E,F\subset \mathds{R}^n$. A mapping $ f: E\to F $ is called quasi-Lipschitz if for every $\varepsilon>0$, there exists $\delta > 0$ such that 
\[\left|\frac{\log|f(x)-f(y)|}{\log |x-y|}-1\right|<\varepsilon ~\text{whenever}~ x,y\in E~\text{with}~ 0<|x-y|<\delta.\]
\end{Def}
\begin{Lem}\label{quasi}(\cite{XLF})
Every quasi-Lipschitz bijection preserves Hausdorff dimension.
\end{Lem}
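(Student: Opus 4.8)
\section*{Proof proposal}

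The plan is to extract from the quasi-Lipschitz hypothesis a pair of two-sided local H\"older estimates and feed them into the classical dimension-distortion fact: if $g\colon A\to\mathds{R}^m$ satisfies $|g(a)-g(b)|\le C|a-b|^{\alpha}$ for all $a,b\in A$ with $0<\alpha\le 1$, then $\dim_H g(A)\le \tfrac{1}{\alpha}\dim_H A$ (push an economical cover of $A$ forward through $g$). Fix $\varepsilon\in(0,1)$ and take the corresponding $\delta$ from Definition \ref{qua}, which we may assume satisfies $\delta<1$. For $0<|x-y|<\delta$ we have $\log|x-y|<0$, so multiplying the inequality in Definition \ref{qua} by $\log|x-y|$ and exponentiating turns it into
\[
|x-y|^{1+\varepsilon}<|f(x)-f(y)|<|x-y|^{1-\varepsilon}\qquad\text{whenever } x,y\in E,\ 0<|x-y|<\delta.
\]
The right inequality exhibits $f$ as locally $(1-\varepsilon)$-H\"older, while the left one, solved for $|x-y|$, exhibits $f^{-1}$ as locally $\tfrac{1}{1+\varepsilon}$-H\"older; here the injectivity of the bijection $f$ guarantees $|f(x)-f(y)|>0$ so that the logarithm is well defined.

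The one genuine obstacle is that these estimates are only \emph{local}: they control pairs at distance less than $\delta$, so one cannot naively push a cover forward or pull it back, since two far-apart points of $E$ might a priori be mapped close together with no control. I would remove this difficulty by partitioning. Cover $\mathds{R}^n$ by countably many half-open cubes $\{Q_k\}$ of side length $<\delta/(2\sqrt n)$, so that each $\mathrm{diam}\,Q_k<\delta$, and set $E_k=E\cap Q_k$. Every pair of points of $E_k$ is then automatically at distance $<\delta$, so on each piece the displayed inequalities become \emph{global} H\"older conditions with constant $1$: the map $f|_{E_k}$ is $(1-\varepsilon)$-H\"older and $f^{-1}|_{f(E_k)}$ is $\tfrac{1}{1+\varepsilon}$-H\"older.

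Applying the distortion fact on each piece gives $\dim_H f(E_k)\le \tfrac{1}{1-\varepsilon}\dim_H E_k$ together with $\dim_H E_k\le (1+\varepsilon)\dim_H f(E_k)$. Since $E=\bigcup_k E_k$ and $f(E)=\bigcup_k f(E_k)$ are countable unions, the countable stability of Hausdorff dimension, $\dim_H\bigcup_k A_k=\sup_k\dim_H A_k$, promotes both bounds from the pieces to $E$ and $f(E)$ themselves, yielding
\[
\frac{1}{1+\varepsilon}\,\dim_H E\ \le\ \dim_H f(E)\ \le\ \frac{1}{1-\varepsilon}\,\dim_H E.
\]
As $\varepsilon\in(0,1)$ was arbitrary, letting $\varepsilon\to 0^+$ forces $\dim_H f(E)=\dim_H E$. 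I expect the decomposition-plus-countable-stability step to be the crux, since it is precisely what converts the purely local hypothesis of Definition \ref{qua} into the global H\"older control required by the distortion estimate; the remaining verifications are routine.
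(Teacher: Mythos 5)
Your proposal is correct and complete. Note first that the paper offers no proof of this lemma at all: it is imported as a known result from Xi \cite{XLF}, so there is no internal argument to compare against, and what you have written is in effect the self-contained proof the paper delegates to that reference. Both of your key moves check out. Multiplying the defining inequality of Definition \ref{qua} by $\log|x-y|<0$ (with the sign flip handled correctly) and exponentiating gives the two-sided estimate $|x-y|^{1+\varepsilon}<|f(x)-f(y)|<|x-y|^{1-\varepsilon}$ for $0<|x-y|<\delta$, and your remark that injectivity of the bijection is what makes $\log|f(x)-f(y)|$ well defined is exactly the right caveat. You also correctly identify and repair the only genuine pitfall of a naive argument: the estimates are purely local, and your decomposition $E=\bigcup_k E_k$ with $E_k=E\cap Q_k$ of diameter less than $\delta$ upgrades them to global H\"older conditions on each piece; note the pleasant point, which your phrasing implicitly uses, that the inverse bound $|x-y|\le|f(x)-f(y)|^{1/(1+\varepsilon)}$ holds for \emph{all} pairs in $f(E_k)$ without any smallness assumption on $|f(x)-f(y)|$, since it is the preimages that are forced to be close. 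Feeding the pieces into the standard H\"older distortion bound, invoking countable stability of Hausdorff dimension, and letting $\varepsilon\to 0^+$ then yields $\dim_H f(E)=\dim_H E$ as claimed. For context, Xi's paper \cite{XLF} is concerned with the finer question of quasi-Lipschitz \emph{equivalence} of fractals, where dimension preservation is a preliminary fact; your route is the expected one, and nothing in it needs fixing.
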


\section{Proof of Theorem 1.2}
In this section, we provide the proof of Theorem \ref{thF}, 
namely, we demonstrate that the Hausdorff dimension of the set $E_{\mathcal{F}}$ is 1. 
Clearly, 1 is the upper bound of $\dim E_{\mathcal{F}}$, 
we only need to prove that the lower bound of $\dim E_{\mathcal{F}}$ is 1. 

\subsection{Subset of $E_{\mathcal{F}}$}

To begin with, we need to construct a strictly increasing sequence of positive integers 
that contains any finite pattern of $\mathcal{F}$. We can enumerate all non-empty finite subsets of $\mathcal{F}$ as 
$\mathcal{F}_1, \mathcal{F}_2, \cdots$, since it is countable. For $k\geq 1$, let
\begin{equation}\label{nk}
  n_k=\#\mathcal{F}_1+\#\mathcal{F}_2+\cdots+\#\mathcal{F}_k.
\end{equation}
According to \eqref{f}, for any integer $a>4$ , choose a sequence of positive integers $\{t_k\}_{k\geq 1}$ satisfying 
\begin{equation}\label{C2}
  \min_{f\in \mathcal{F}_{k}}f(t_{k})>2a^{n_k^2}~\text{for all}~k \geq 1.
\end{equation}
\begin{equation}\label{C1}
  \min_{f\in \mathcal{F}_{k+1}}f(t_{k+1})>\max_{f\in \mathcal{F}_{k}}f(t_{k})~\text{for all}~ k \geq 1.
\end{equation} 
By arranging the elements of set $\{f(t_k):f\in\mathcal{F}_k, k\geq 1\}$ in increasing order, 
we obtain the target sequence $\{b_m\}_{m\geq 1}$. 

Now, we introduce an auxiliary set to construct the target subset. For some integer $a>4$, define
\begin{align*}
  E_0=\{x\in(0,1]:~a^n\leq d_n(x)<2a^n ~\text{and}~d_n(x)\notin \{b_m:m\geq 1\}~\text{for all}~ n\geq 1\}.
\end{align*}
The following lemma related to the sequence $\{b_m\}_{m\geq 1}$ shows that the set $E_0$ is nonempty.
\begin{Lem}\label{bm}
  $\#\{m:b_m<2a^{n}\}<\sqrt{n}$  ~for all $n \geq 1$. 
\end{Lem}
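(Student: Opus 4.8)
The plan is to exploit the fact that the defining inequality \eqref{C2} forces every value produced by the $k$-th block $\mathcal{F}_k$ to be extremely large, so that only the earliest blocks can possibly contribute values below the threshold $2a^n$. Recall that $\{b_m\}_{m\geq 1}$ is the increasing enumeration of the set $\{f(t_k):f\in\mathcal{F}_k,\ k\geq 1\}$, which is the union of the ``blocks'' $\{f(t_k):f\in\mathcal{F}_k\}$ over $k\geq 1$, each of cardinality at most $\#\mathcal{F}_k$. Hence the quantity $\#\{m:b_m<2a^n\}$ is at most the number of pairs $(k,f)$ with $f\in\mathcal{F}_k$ and $f(t_k)<2a^n$, and my goal is to bound this latter count by $\sqrt{n}$.

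First I would isolate which blocks can contribute. Suppose $f(t_k)<2a^n$ for some $k$ and some $f\in\mathcal{F}_k$. By \eqref{C2} we have $f(t_k)\geq\min_{g\in\mathcal{F}_k}g(t_k)>2a^{n_k^2}$, so that $2a^{n_k^2}<2a^n$. Since $a>4>1$, this yields $n_k^2<n$, i.e. $n_k<\sqrt{n}$. Thus only blocks whose index $k$ satisfies $n_k<\sqrt{n}$ can contribute any value below $2a^n$.

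Next I would carry out the counting. Because $\{n_k\}_{k\geq1}$ is strictly increasing by its definition \eqref{nk} (each $\#\mathcal{F}_k\geq 1$), the set of admissible indices $\{k:n_k<\sqrt{n}\}$ is an initial segment $\{1,2,\dots,K\}$, where $K$ is the largest index with $n_K<\sqrt{n}$ (and the count is trivially $0$ if no such index exists). Summing the contributions over these blocks gives
$$\#\{m:b_m<2a^n\}\ \leq\ \sum_{k=1}^{K}\#\mathcal{F}_k\ =\ n_K\ <\ \sqrt{n},$$
which is exactly the claimed bound.

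I do not expect any genuine obstacle here: the statement is a direct counting consequence of \eqref{C2}, and the only point requiring care is the translation of the magnitude condition $f(t_k)<2a^n$ into the index condition $n_k<\sqrt{n}$, which is made possible precisely by the quadratic exponent $n_k^2$ appearing in \eqref{C2}. Notably, condition \eqref{C1} is not needed for this lemma; it serves only to guarantee that the blocks appear in $\{b_m\}$ in their natural order and will be used elsewhere.
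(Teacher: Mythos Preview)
Your argument is correct and essentially identical to the paper's: both identify the largest index $K$ with $n_K^2<n$ and observe that only the first $K$ blocks can contribute, giving a count of at most $n_K<\sqrt n$. The only difference is cosmetic---the paper first fixes $k$ with $n_k^2<n\le n_{k+1}^2$ and then argues, whereas you derive $n_k<\sqrt n$ directly from \eqref{C2} for each contributing block; also, the paper cites both \eqref{C1} and \eqref{C2}, while you correctly note that \eqref{C2} alone suffices for this lemma.
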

\proof
For convenience, we extend the definition by letting $n_0=0$. Then for any $n \geq 1$, there exists some $k\in\mathbb{N}$ such that $n_k^2<n\leq n_{k+1}^2$. 
By \eqref{C1} and \eqref{C2}, 
\[b_m<2a^{n}\Rightarrow b_m\in \{f(t_i):f\in \mathcal{F},1\leq i\leq k\}.\]
Combining with \eqref{nk}, we conclude that 
\[\#\{m:b_m<2a^{n}\}\leq n_k<\sqrt{n}.\]
\endproof

By inserting the sequence $\{b_m\}_{m\geq 1}$ to the Engel digit sequence for every element in $E_0$, we obtain the target subset, 
which can be regarded as the image of $E_0$ under a certain mapping $\pi$.
For any $x\in E_0$, we combine sequences $\{b_m\}_{m\geq 1}$ and $\{d_n(x)\}_{n\geq 1}$ to form a new strictly increasing sequence $\{c_n\}_{n\geq 1}$. 
Clearly, there exists a unique $y\in (0,1]$ such that $d_n(y)=c_n$ for $n\geq 1$. 
This induces a mapping $\pi:x\mapsto y$ on $E_0$.
Define 
\[E_f=\{\pi(x):x\in E_0\}.\]
For any $y\in E_f$, its Engel digit sequence is strictly increasing and contain any finite pattern of $\mathcal{F}$ since $\{b_m\}_{m\geq 1} \subset \{d_n(y)\}_{n\geq 1}$. 
Therefore, we have $E_f \subset E_{\mathcal{F}}$.

\subsection{Hausdorff dimension of $E_{\mathcal{F}}$}
We begin by proving that the mapping $\pi$ defined above is quasi-Lipschitz, which implies $\dim_H E_f =\dim_H E_0$ by Lemma \ref{quasi}.

\begin{Lem}\label{pi}
The mapping $\pi:E_0\to E_f$ is quasi-Lipschitz.
\end{Lem}
\proof
For any $x_1,x_2\in E_0$ with $x_1\neq x_2$, 
there exists $n\in \mathbb{N}$ such that $x_1,x_2\in I_n(d_1,d_2,\cdots,d_n)$ and $d_{n+1}(x_1)\neq d_{n+1}(x_2)$. 
Assume that $d_{n+1}(x_1)>d_{n+1}(x_2)$, we have the cylinder of order $n+1$ containing $x_1$ is on the left of the 
the cylinder of order $n+1$ containing $x_2$. Furthermore, We discuss the relationships between cylinders through the following computations:
\[
  \frac{\sum_{k=d_{n+1}(x_2)+1}^{\infty}|I_{n+2}(d_1,d_2,\cdots,d_n,d_{n+1}(x_2),k)|}{|I_{n+1}(d_1,d_2,\cdots,d_n,d_{n+1}(x_2))|}
  =\frac{d_{n+1}(x_2)-1}{d_{n+1}(x_2)}>\frac{1}{2a^2},
\]
since $d_n(x_2)\geq a^n>4$.
Based on the above equation, we can obtain:
\begin{equation}\label{juli}
  \frac{|I_{n+1}(d_1,d_2,\cdots,d_n,d_{n+1}(x_2))|}{2a^2}\leq |x_1-x_2|\leq |I_n(d_1,d_2,\cdots,d_n)|
\end{equation}

Let $y_1=\pi(x_1),\enspace y_2=\pi(x_2)$. Assume that $b_l<d_{n+1}(x_2)<b_{l+1}$, according to $d_{n+1}(x_2)<2a^{n+1}$ and 
Lemma \ref{bm}, we have
\begin{equation}\label{l}
  l<\sqrt{n+1}.
\end{equation}
It is evident that the first $n+l$ digits in the Engel expansion of $y_1$ and $y_2$ are the same. Combining sequences 
${d_1,d_2,\cdots,d_n}$ and $b_1,b_2,\cdots,b_l$ in increasing order to form sequence $c_1,c_2,\cdots,c_{n+l}$, we have 
$$y_1,y_2\in I_{n+l}(c_1,c_2,\cdots,c_{n+l}),$$
and
$$d_{n+l+1}(y_2)=d_{n+1}(x_2)<\min(d_{n+1}(x_1),b_{l+1})=d_{n+l+1}(y_1).$$
Similar to the previous calculations, we can obtain analogous conclusions:
\begin{equation}\label{juli2}
  \frac{|I_{n+l+1}(c_1,c_2,\cdots,c_{n+l},d_{n+l+1}(y_2))|}{2a^2}\leq |y_1-y_2|\leq |I_{n+l}(c_1,c_2,\cdots,c_{n+l})|.
\end{equation}

By proposition \ref{ZH} and the definition of $E_0$, we obtain that
\begin{equation}\label{1}
  -\log|I_{n+1}(d_1,d_2,\cdots,d_{n+1}(x_2))|\sim -\log|I_n(d_1,d_2,\cdots,d_n)|\sim \frac{1}{2}n^2\log a
\end{equation}
as $n\to \infty$, where $f(n)\sim g(n)$ as $n \to \infty$ means that $\lim_{n\to \infty}f(n)/g(n)=1$.
From \eqref{l}, we also have
\begin{equation}\label{2}
  -\log|I_{c+l+1}(c_1,c_2,\cdots,c_{n+l},d_{n+l+1}(y_2))|\sim -\log|I_{n+l}(c_1,c_2,\cdots,c_{n+l})|\sim \frac{1}{2}n^2\log a
\end{equation}
as $n\to \infty$.

Finally, Combining \eqref{juli}, \eqref{juli2}, \eqref{1} and \eqref{2}, We have demonstrated that
$$\frac{\log|\pi(x_1)-\pi(x_2)|}{\log|x_1-x_2|}\to 1$$
uniformly as $|x_1-x_2|\to 0$. 
\endproof

Now we will determine the Hausdorff dimension of $E_0$. 
Consequently, combining Lemma \ref{quasi} with Lemma \ref{pi} yields
\[\dim_H E_\mathcal{F}\geq \dim_H E_f =\dim_H E_0.\]

\begin{Lem}\label{e0}
  For any integer $a>4$ , $\dim_HE_0=1$.
\end{Lem}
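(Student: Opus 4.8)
The plan is to prove $\dim_H E_0 = 1$ by constructing a natural Borel probability measure $\mu$ supported on $E_0$ and applying Billingsley's Theorem (Lemma \ref{mass}). Since $E_0$ is a Cantor-like set where, at each level $n$, the digit $d_n$ ranges over the integers in $[a^n, 2a^n)$ excluding the finitely many forbidden values $\{b_m\}$, I would first count the number of admissible choices at each level. By Lemma \ref{bm}, the number of $b_m$ below $2a^n$ is at most $\sqrt{n}$, so the set of legal digits at level $n$ has cardinality roughly $a^n(1-\tfrac{1}{a}) - O(\sqrt{n})$, which is comparable to $a^n$. The measure $\mu$ is then defined by distributing mass uniformly among the admissible continuations at each stage, so that each cylinder $I_n(d_1,\dots,d_n)$ meeting $E_0$ receives mass equal to the product of the reciprocals of these counts.

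Next I would estimate the local dimension $\liminf_{r\to 0^+}\frac{\log\mu(B(x,r))}{\log r}$ for $x\in E_0$. The key quantities are the measure of a cylinder $I_n$ and its length $|I_n|$. From Proposition \ref{ZH}, $-\log|I_n(d_1,\dots,d_n)| = \log\bigl(d_1\cdots d_{n-1}d_n(d_n-1)\bigr)$, and using $a^k \le d_k < 2a^k$ this is asymptotic to $\tfrac12 n^2\log a$ as $n\to\infty$ (exactly the estimate already recorded in \eqref{1}). On the measure side, $-\log\mu(I_n)$ equals the sum of logarithms of the digit-counts at levels $1$ through $n$; since each count is comparable to $a^k$, this sum is also asymptotic to $\tfrac12 n^2\log a$. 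Therefore the ratio $\frac{\log\mu(I_n)}{\log|I_n|}$ tends to $1$, and the $-O(\sqrt n)$ correction from the excluded $\{b_m\}$ is negligible against the dominant $n^2$ term. To pass from cylinders to genuine balls $B(x,r)$ I would sandwich $r$ between consecutive cylinder lengths exactly as in the spacing argument of Lemma \ref{pi}, using that neighboring level-$(n{+}1)$ cylinders inside $I_n$ are comparable in size so that $\mu(B(x,r))$ is controlled by $\mu(I_n)$ and $\mu(I_{n+1})$ up to bounded factors.

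The main obstacle I anticipate is the ball-versus-cylinder comparison rather than the asymptotic counting, which is essentially routine. In Engel expansions the cylinders at a fixed level are highly non-uniform in length, and a ball $B(x,r)$ can overlap several cylinders that are not its own; I must verify that the total $\mu$-mass of the extra cylinders intercepted by the ball does not dominate $\mu(I_n)$. The saving grace is the gap estimate \eqref{juli}, which shows that the distance from $x$ to the boundary of its enclosing cylinder is comparable (within a factor $2a^2$) to the cylinder lengths, so a ball of radius $r \approx |I_{n+1}|$ meets only a bounded number of competing level-$(n{+}1)$ cylinders of comparable measure. Consequently $\log\mu(B(x,r))$ and $\log|I_n|$ differ by a bounded additive constant, which vanishes in the limit since the denominator $\log r \sim -\tfrac12 n^2\log a \to -\infty$.

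Finally, since the local dimension equals $1$ at every $x\in E_0$ and $\mu(E_0)=1>0$, Lemma \ref{mass} gives $\dim_H E_0 \ge 1$; as $E_0\subset(0,1]$ forces $\dim_H E_0\le 1$, we conclude $\dim_H E_0 = 1$ for every integer $a>4$, which combined with the earlier chain $\dim_H E_{\mathcal F}\ge\dim_H E_f=\dim_H E_0$ completes the lower bound and hence Theorem \ref{thF}.
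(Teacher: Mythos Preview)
Your proposal is correct and follows essentially the same route as the paper: a uniform mass distribution on the admissible cylinders, the count $a^k-O(\sqrt{k})$ for legal digits at level $k$ via Lemma~\ref{bm} (your factor $(1-\tfrac1a)$ is a harmless slip, since $[a^k,2a^k)$ contains exactly $a^k$ integers), the asymptotics $-\log|I_n|\sim-\log\mu(I_n)\sim\tfrac12 n^2\log a$, and Billingsley's lemma. For the ball-to-cylinder step the paper argues directly that $B(x,r)$ with $r<|I_n|$ meets at most four admissible level-$n$ cylinders (using $\tfrac{|I_n(\ldots,d_n+1)|+|I_n(\ldots,d_n+2)|}{|I_n(\ldots,d_n)|}=\tfrac{2d_n-2}{d_n+2}>1$ for $d_n>4$) rather than appealing to \eqref{juli}, but the effect is the same.
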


\begin{proof}
Let $a>4$ be an integer, for any $n \geq1$, set
\[D_n=\{(d_1,d_2,\cdots,d_n)\in \mathds{N}^n: ~a^k\leq d_k(x)<2a^k~\text{and}~
  d_k(x)\notin \{b_m:m\geq 1\}~\text{for all}~1\leq k\leq n\}.\]
By Lemma \ref{bm}, we have
\begin{equation}
  \#D_n>\prod_{k=1}^n(a^k-\sqrt{k}).
\end{equation}
For each $n\geq1$ and $(d_1,d_2,\cdots,d_n)\in D_n$, we call $I_n(d_1,d_2,\cdots,d_n)$ an admissible cylinder of order $n$ with respect to $E_0$.
Then
$$E_0=\bigcap_{n=1}^{\infty}\bigcup_{(d_1,d_2,\cdots,d_n)\in D_n}\text{cl}\enspace I_n(d_1,d_2,\cdots,d_n).$$
where “cl” denotes closure.

Now we define a set function $\mu$ on all admissible cylinders with respect to $E_0$ by setting
$$\mu(I_n(d_1,d_2,\cdots,d_n))=\frac{1}{\#D_n}.$$
By the Carath$\acute{\text{e}}$odory extension theorem, $\mu$ can be extended to be a  Borel probability measure supported
on $E_0$. We will estimate the Hausdorff dimension of $E_0$ based on the Billingsley's Theorem.
  
For each $x\in E_0$, there exists a sequence $\{d_1,d_2,\cdots,d_n,\cdots\}$ such that for each $n\geq1$, 
$x\in I_n(d_1,d_2,\cdots,d_n)$ and $(d_1,d_2,\cdots,d_n)\in D_n$. 
For any $0<r<\frac{1}{8^a}$, there exists $n\geq 1$ such that 
\[|I_n(d_1,d_2,\cdots,d_n)|>r\geq |I_{n+1}(d_1,d_2,\cdots,d_n,d_{n+1})|.\]
For any adjacent admissible cylinders of order $n$ with respect to $E_0$, we have 
\[
  \frac{|I_n(d_1,d_2,\cdots,d_n+1)|+|I_n(d_1,d_2,\cdots,d_n+2)|}{|I_n(d_1,d_2,\cdots,d_n)|}
  =\frac{2d_n-2}{d_n+2}>1,
\]
since $d_n\geq a^n>4$. Then $B(x,r)$ can intersect at most four admissible cylinders of order $n$ with respect to $E_0$ and at least one admissible cylinder of order $n+1$ with respect to $E_0$. Therefore
\begin{align*}
  \liminf_{r\to 0}\frac{\log \mu(B(x,r))}{\log r}
  &\geq \liminf_{n\to \infty}\frac{\log4\mu(I_n(d_1,d_2,\cdots,d_n))}{\log|I_{n+1}(d_1,d_2,\cdots,d_{n+1})|}\\
  &=\liminf_{n\to \infty}\frac{\log\#D_n-\log 4}{\sum_{k=1}^{n+1}\log d_k+\log(d_{n+1}-1)}\\
  &\geq\liminf_{n\to \infty}\frac{\sum_{k=1}^{n}\log(a^k-\sqrt{k})}{\sum_{k=1}^{n+1}\log(2a^k)+\log(2a^{n+1})}\\
  &=1.
\end{align*}
By Lemma \ref{mass}, we obtain that $\dim_HE_0\geq1$. Therefore  $\dim_HE_0=1$.
\end{proof}

\section{Applications of Theorem\ref{thF}}
In this section, we derive the corollaries stated in the introduction through a careful choice of $\mathcal{F}$ and certain finite subsets of $\mathcal{F}$.

At first, Take 
\[\mathcal{F}=\{kn+\ell: k\in \mathds{Z}^+, \ell\in \mathds{N}\}.\]
For any $x\in E_{\mathcal{F}}$, $m\in\mathds{Z}^+$ and any $\{f_{i_1},f_{i_2},\cdots,f_{i_m}\}\subset \mathcal{F}$, there exists $n\in \mathds{N}$ such that  
\[\{f_{i_1}(n),f_{i_2}(n),\cdots,f_{i_m}(n)\}\subset \{d_n(x)\}_{n\geq 1}.\]

(1)$\{n+1,n+2,\cdots,n+m\}\subset \mathcal{F}$ for any $m\in \mathds{Z}^+$.

Suppose there exists $n_m\in \mathds{N}$ such that  
\[\{n_m+1,n_m+2,\cdots,n_m+n\}\subset \{d_n(x)\}_{n\geq 1}.\]
Then 
\begin{align*}
\overline{d}(\{d_n(x)\}_{n\geq 1})&=
\limsup_{m \to \infty}\frac{1}{m}\sup_{n\in \mathds{N}} \#\{i\in \{d_n(x)\}_{n\geq 1}: n\leq i< n+m\})\\
&\geq  \limsup_{m \to \infty}\frac{1}{m} \#\{i\in \{d_n(x)\}_{n\geq 1}: n_m\leq i< n_m+m\})\\
&\geq  \limsup_{m \to \infty}\frac{m-1}{m}=1.
\end{align*}
Therefore, $E_\mathcal{F}\subset E_B$. It can be concluded that $\dim_H E_B=1$ by Theorem \ref{thF}.

(2)$\{n+b:b\in B\}\subset \mathcal{F}$ for any finite subset $B\subset \mathds{N}$.

There exists $n_B\in \mathds{N}$ such that $\{n_B+b:b\in B\}\subset \{d_n(x)\}_{n\geq 1}$. 
Thus, $E_\mathcal{F}\subset E_T$. As a result $\dim_H E_T=1$.

Especially, for any integer $k\geq 3$ and $d\geq 1$, $B=\{d,2d,\cdots,kd\}$ is a finite subset of $\mathds{N}$.
The translation of $B$ is precisely a $k$-term arithmetic progression with common difference $d$.
This leads to $E_\mathcal{F}\subset E_A$, and $\dim_H E_A=1$.

(3)$\{bn:b\in B\}\subset \mathcal{F}$ for any finite subset $B\subset \mathds{Z}^+$.

There exists $\widetilde{n}_B\in \mathds{N}$ such that $\{b \widetilde{n}_B:b\in B\}\subset \{d_n(x)\}_{n\geq 1}$. 
It follows that $E_\mathcal{F}\subset E_S$. As a result $\dim_H E_S=1$.

Especially, for any integer $k\geq 3$ and $q\geq 2$, $B=\{q,q^2,\cdots,q^k\}$ is a finite subset of $\mathds{Z}^+$.
The  scalar multiples  of $B$, $\{\widetilde{n}_B q,\widetilde{n}_B q^2,\cdots,  \widetilde{n}_B q^k\}$, 
is precisely a $k$-term geometric  progression with common ratio $q$.
This leads to $E_\mathcal{F}\subset E_G$, and $\dim_H E_G=1$.

Now, Take 
\[\mathcal{F}=\{k^n: k\in \mathds{Z}^+,k\geq 2\}.\]
For any finite subset $B\subset \{2,3,4,\cdots\}$, $\{b^n:b\in B\}$ is a finte subset of $\mathcal{F}$.
For any $x\in E_{\mathcal{F}}$, there exists $\widehat{n}_B\in\mathds{N}$ 
such that $\{b^{\widehat{n}_B}:b\in B\}\subset \{d_n(x)\}_{n\geq 1}$. 
That is to say $\{d_n(x)\}_{n\geq 1}$ contains a power of $B$.
Hence,  $E_\mathcal{F}\subset E_P$.  From this we have $\dim_H E_P=1$ by Theorem \ref{thF}.

{\small}

\end{document}